\newtheorem{df}{Definition}[section]
\newtheorem{prop}[df]{Proposition}
\newtheorem{lem}[df]{Lemma}
\newtheorem{cor}[df]{Corollary}
\newtheorem{rem}[df]{Remark}
\def\id{\operatorname{id}}
\def\im{\operatorname{im}}
\def\span{\operatorname{span}}
\journal{arXiv.org}
\begin{document}
\begin{frontmatter}
\title{Equivariant Algebraic Morse Theory}
\author[ruelle]{Ralf Donau}
\address[ruelle]{Fachbereich Mathematik, Universit\"at Bremen, Bibliothekstra\ss e 1, 28359 Bremen, Germany}
\ead{ruelle@math.uni-bremen.de}
\begin{abstract}
In this paper we develop Algebraic Morse Theory for the case where a group acts on a free chain complex. Algebraic Morse Theory is an adaption of Discrete Morse Theory to free chain complexes.
\end{abstract}
\begin{keyword}
Discrete Morse Theory\sep Acyclic matching\sep Algebraic Morse Theory
\end{keyword}
\end{frontmatter}
\section{Introduction}
There exists an equivariant version of the Main Theorem of Discrete Morse Theory, see \cite{freij}. In this paper I present an equivariant version of Theorem 11.24 in \cite[Chapter 11.3]{buch}. We use the same notion of an equivariant acyclic matching as in Equivariant Discrete Morse Theory. An example for working with equivariant acyclic matchings can be found in \cite{donau3}.
\section{Equivariant acyclic matchings}
The definition of an acyclic matching on a poset can be found in \cite{clmap,buch}.
\begin{df}
Let $P$ be a poset and let $G$ be a group acting on $P$. Let $M$ be an acyclic matching on $P$. We call $M$ an \emph{$G$-equivariant acyclic matching} if $(a,b)\in M$ implies $(ga,gb)\in M$ for all $g\in G$ and $a,b\in P$.
\end{df}
There exists a characterization of acyclic matchings by means of order-preserving maps with small fibers, see Definition 11.3 and Theorem 11.4 in \cite[Chapter 11]{buch}. In a similar way we can also characterize $G$-equivariant acyclic matchings by means of order-preserving $G$-maps with small fibers.

For an order-preserving map with small fibers $\varphi$ let $M(\varphi)$ denote its associated acyclic matching which consists of all fibers of cardinality $2$, see \cite[Chapter 11]{buch}.
\begin{prop}
\label{smallfibers}
Let $G$ be a group acting on a finite poset $P$. For any order-preserving $G$-map $\varphi:P\longrightarrow Q$ with small fibers, the acyclic matching $M(\varphi)$ is $G$-equivariant. On the other hand, any $G$-equivariant acyclic matching $M$ on $P$ can be represented as $M=M(\varphi)$, where $\varphi:P\longrightarrow Q$ is an order-preserving $G$-map with small fibers.
\end{prop}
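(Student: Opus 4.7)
The plan is to reduce both implications to the non-equivariant characterization (Theorem 11.4 in \cite{buch}) and then add $G$-equivariance on top.

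For the first direction, I would invoke the non-equivariant theorem to get that $M(\varphi)$ is already an acyclic matching, so only equivariance needs to be checked. If $(a,b)\in M(\varphi)$ then $\varphi(a)=\varphi(b)$, and by the $G$-map property
\[
\varphi(ga)=g\varphi(a)=g\varphi(b)=\varphi(gb),
\]
so $ga,gb$ lie in a common fiber of $\varphi$. Since $G$ acts on $P$ by poset automorphisms, $(ga,gb)$ is still a covering pair with $ga<gb$, hence it belongs to $M(\varphi)$.

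For the converse, I would use the construction from the proof of \cite[Theorem 11.4]{buch}: take $Q$ to be the quotient of $P$ by the equivalence relation $\sim$ which identifies each matched pair, and let $\varphi:P\longrightarrow Q$ be the projection. Acyclicity of $M$ ensures that the induced relation on $Q$ is a genuine partial order, and by construction $\varphi$ is order-preserving with fibers of size at most $2$ and satisfies $M(\varphi)=M$. To promote this to the equivariant setting, the key observation is that $\sim$ is $G$-invariant: if $(a,b)\in M$ then by hypothesis $(ga,gb)\in M$, so $a\sim b$ implies $ga\sim gb$. Hence the $G$-action on $P$ descends to a $G$-action on $Q$ with respect to which $\varphi$ is tautologically a $G$-map.

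The main piece of bookkeeping that remains is to verify that the descended action on $Q$ acts by order-preserving bijections, but I expect this to be essentially automatic: it follows from the fact that $G$ originally acted on $P$ by poset automorphisms together with the fact that the partial order on $Q$ is the one induced from $P$. That, together with a brief check that $M(\varphi)=M$ still holds after $G$-equivariance has been built in, is the only real verification the proof requires.
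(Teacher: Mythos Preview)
Your argument is correct in both directions. Note, however, that the paper does not actually prove this proposition in the text: it simply refers the reader to \cite{donau3}. Your approach---invoking the non-equivariant characterization \cite[Theorem~11.4]{buch} for the underlying acyclicity and then layering the $G$-equivariance on top, with $Q=P/{\sim}$ for the converse---is the natural one and is almost certainly what appears in the cited reference. The only minor remark is that in the first direction you should also observe that $g$ acts bijectively on $P$, so $ga\neq gb$ and hence $\{ga,gb\}$ really is a two-element fiber; you implicitly use this but do not state it.
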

The proof of Proposition \ref{smallfibers} can be found in \cite{donau3}.
\section{The main result}
We consider chain complexes of modules over some fixed commutative ring $R$ with unit. Furthermore we consider group actions on such chain complexes. Let $G$ be a group and let $C_*=(\dots\overset{\partial_{n+2}}\longrightarrow C_{n+1}\overset{\partial_{n+1}}\longrightarrow C_n\overset{\partial_n}\longrightarrow\dots)$ be a finitely generated free chain complex with an action of the group $G$ and let $\Omega=(\Omega_n)_n$ be a $G$-basis of $C_*$, i.e. each $\Omega_n$ is closed under the action of $G$. For $b\in\Omega_n$ let $k_b:C_n\longrightarrow R$, $\sum_{b'\in\Omega_n}\lambda_{b'}b'\longmapsto\lambda_b$ denote the linear function which maps any $x\in C_n$ to the coefficient of $b$ inside the linear representation of $x$.

Let $P(C_*,\Omega):=\bigcup_n\Omega_n$. We define an order relation on $P(C_*,\Omega)$ as follows. For $a\in\Omega_n$ and $b\in\Omega_{n+1}$ we denote the \emph{weight of the covering relation} by $w(b\succ a):=k_a(\partial_n b)$, we set $a\leq b$ if $w(b\succ a)\not=0$. Furthermore for $a\in\Omega_n$ and $b\in\Omega_m$ with $n\leq m$, we set $a\leq b$ if there exists a sequence $(c_i)_{n\leq i\leq m}$ with $c_i\leq c_{i+1}$ for $n\leq i<m$ such that $a=c_n$ and $b=c_m$. This defines a partial order relation on $P(C_*,\Omega)$, which can be easily verified. Notice that $P(C_*,\Omega)$ is a $G$-poset since $\partial_n$ is a $G$-map.

Let $M$ be an $G$-equivariant acyclic matching on $P(C_*,\Omega)$ such that $w(b\succ a)$ is invertible for any $(a,b)\in M$. Let $\varphi$ denote the order-preserving $G$-map with small fibers with $M(\varphi)=M$, which exists by Proposition \ref{smallfibers}.
\begin{df}
For $b\in C_n$ we define the $G$-subcomplex ${\cal A}(Gb)$ as follows.
\[\dots\longrightarrow 0\longrightarrow\span(Gb)\overset{\partial^{Gb}_n}\longrightarrow\span(G\partial_n b)\longrightarrow 0\longrightarrow\dots\]
$\partial^{Gb}_n$ denotes the restriction of $\partial_n:C_n\longrightarrow C_{n-1}$ to $\span(Gb)$, i.e. $\partial^{Gb}_n(x):=\partial_n(x)$ for $x\in\span(Gb)$.
\end{df}
Notice that $\partial^{Gb}_n$ is surjective by construction.
\begin{rem}
\label{poset_orbit}
Let $G$ be a finite group acting on a finite poset $Q$. Let $q\in Q$ and $g\in G$. Then $gq\leq q$ implies $gq=q$. In other words, the elements inside an orbit are not comparable to each other.
\end{rem}
Let $(a,b)\in M$ be a matching pair. Then for any $g\in G$, $a\prec gb$ implies $\varphi(b)=\varphi(a)\leq\varphi(gb)=g\varphi(b)$ which implies $\varphi(b)=\varphi(gb)$ by Remark \ref{poset_orbit}. Hence $b=gb$ since $\varphi$ has small fibers. In particular $k_a(\partial gb)=0$ for $gb\not=b$.
\begin{lem}
\label{iso}
Let $(a,b)\in M$ be a matching pair, assume $b\in C_n$. Then $\partial^{Gb}_n$ is an isomorphism and ${\cal A}(Gb)$ is $G$-homotopy equivalent to the zero complex.
\end{lem}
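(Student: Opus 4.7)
The plan is to observe that $\span(Gb)$ is a free $R$-module on $Gb$, prove that $\partial^{Gb}_n$ is injective (hence an isomorphism, given the surjectivity already noted), and then use the inverse as a $G$-equivariant contracting chain homotopy of ${\cal A}(Gb)$.

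For the injectivity, I would list the orbit via coset representatives as $Gb=\{g_1b,\dots,g_rb\}$, take an arbitrary element $x=\sum_i\lambda_i g_ib\in\span(Gb)$, and evaluate $k_{g_ja}\circ\partial_n$ on it for each $j$. Using the $G$-equivariance of $\partial_n$ together with the identity $k_{gc}(y)=k_c(g^{-1}y)$ (immediate from the definition of $k_c$), I would obtain
\[
k_{g_ja}\bigl(\partial_n(g_ib)\bigr)=k_a\bigl(\partial_n((g_j^{-1}g_i)b)\bigr).
\]
For $i=j$ this equals $w(b\succ a)$, which is invertible by hypothesis; for $i\ne j$ the representatives $g_i$ and $g_j$ lie in distinct cosets of $\operatorname{Stab}(b)$, so $(g_j^{-1}g_i)b\ne b$, and the observation preceding the lemma forces $k_a(\partial_n((g_j^{-1}g_i)b))=0$. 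Hence $k_{g_ja}(\partial_n x)=\lambda_j w(b\succ a)$, and the invertibility of $w(b\succ a)$ makes every $\lambda_j$ vanish whenever $\partial_n x=0$.

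With injectivity in hand, $\partial^{Gb}_n$ is an isomorphism. To obtain the $G$-homotopy equivalence to the zero complex I would set $h:=(\partial^{Gb}_n)^{-1}:\span(G\partial_n b)\to\span(Gb)$ (placed as the degree-$(n{-}1)$ component of a chain homotopy, zero in all other degrees); this is a $G$-map because $\partial^{Gb}_n$ is, and the identities $h\partial_n=\id$ on $\span(Gb)$ and $\partial_n h=\id$ on $\span(G\partial_n b)$ give $\partial h+h\partial=\id_{{\cal A}(Gb)}$ componentwise, exhibiting $\id_{{\cal A}(Gb)}$ as $G$-chain-homotopic to $0$. The main obstacle is the coefficient bookkeeping in the injectivity step; once the matrix of $\partial^{Gb}_n$ in the chosen orbit basis is seen to be diagonal with the invertible scalar $w(b\succ a)$ on the diagonal, both conclusions fall out.
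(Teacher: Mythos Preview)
Your proposal is correct and follows essentially the same approach as the paper. The only stylistic difference is in the injectivity step: the paper applies $k_a$ once to conclude $\lambda_b=0$ and then invokes the $G$-equivariance of $\partial_n$ to transport this to every $\lambda_{b'}$, whereas you unfold this symmetry by evaluating $k_{g_ja}$ for each coset representative; the contracting homotopy via $(\partial^{Gb}_n)^{-1}$ is identical in both.
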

\begin{proof}
We have to show that $\partial^{Gb}_n$ is injective. Assume $\partial_n(\sum_{b'\in Gb}\lambda_{b'}b')=0$. Then $0=k_a(\sum_{b'\in Gb}\lambda_{b'}\partial_nb')=\lambda_{b}k_a(\partial_nb)$, which implies $\lambda_b=0$, since $k_a(\partial_nb)$ is invertible. This implies $\lambda_{b'}=0$ for all $b'\in Gb$, since $\partial_n$ is a $G$-map.

The composition ${\cal A}(Gb)\longrightarrow0\longrightarrow{\cal A}(Gb)$ is homotop to $\id:{\cal A}(Gb)\longrightarrow{\cal A}(Gb)$ via the $G$-chain homotopy $P=(P_i)$, where
\begin{itemize}
\item $P_{n-1}:=(\partial^{Gb}_n)^{-1}$
\item $P_i:\equiv 0$ for $i\not=n-1$
\end{itemize}
On the other hand $0\longrightarrow{\cal A}(Gb)\longrightarrow 0$ equals $\id:0\longrightarrow 0$. Hence ${\cal A}(Gb)$ is $G$-homotopy equivalent to the zero complex.
\end{proof}
\begin{prop}
\label{eqthm}
Let $M$ be an $G$-equivariant acyclic matching on $P(C_*,\Omega)$ such that $w(b\succ a)$ is invertible. Then there exists a $G$-chain complex $C_*^M$ such that $C_*\cong_G C_*^M\oplus T_*$ where $T_*=\bigoplus_{G(a,b)\in M/G}{\cal A}(Gb)$.
\end{prop}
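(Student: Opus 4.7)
The plan is induction on $|M/G|$; the base $M=\emptyset$ is trivial with $C_*^M:=C_*$ and $T_*:=0$.

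For the inductive step I would fix any matched orbit $G(a,b)\in M/G$ with $b\in\Omega_n$ and split ${\cal A}(Gb)$ off as a $G$-summand. The key input is Lemma \ref{iso}, which gives that $\alpha:=\partial^{Gb}_n:\span(Gb)\longrightarrow\span(G\partial b)$ is an isomorphism. The identity $k_a(\partial gb)=0$ for $gb\neq b$ from the discussion preceding Lemma \ref{iso} extends by $G$-equivariance to $k_{g_ia}(g_j\partial b)=k_a(\partial b)\,\delta_{ij}$ for coset representatives $\{g_i\}$ of $G/G_b$. This yields a triangular change of basis in $C_{n-1}$ and hence a $G$-direct sum decomposition $C_{n-1}=\span(G\partial b)\oplus V_{n-1}$, where $V_{n-1}:=\span(\Omega_{n-1}\setminus Ga)$. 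Setting $U_n:=\span(\Omega_n\setminus Gb)$ and writing $\partial_n|_{U_n}=\beta+\gamma$ relative to this decomposition, I would define a $G$-equivariant section $s$ of the projection $\pi:C_*\longrightarrow C_*/{\cal A}(Gb)$ by $s_n([u]):=u-\alpha^{-1}\beta(u)$ on $U_n$ and the obvious identifications in all other degrees. A short computation using $\partial^2=0$ shows that $s$ is a chain map, whence $C_*\cong_G{\cal A}(Gb)\oplus C'_*$ where $C'_*:=C_*/{\cal A}(Gb)$ is free on the $G$-basis $\Omega':=\Omega\setminus(Gb\cup Ga)$.

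The main obstacle is then verifying that $M':=M\setminus G(a,b)$ is a $G$-equivariant acyclic matching on $P(C'_*,\Omega')$ with invertible weights, so that the induction hypothesis applies. A direct calculation shows that the only covering relations which change in passing from $P(C_*,\Omega)$ to $P(C'_*,\Omega')$ are those in degree $n$, and they satisfy $k_{a'}(\partial^{C'}b')=k_{a'}(\partial b')-k_a(\partial b)^{-1}\sum_i k_{g_ia}(\partial b')\,k_{a'}(g_i\partial b)$. For the correction to be nonzero on a pair $(a',b')\in M'$, both $g_ia\prec b'$ and $a'\prec g_ib$ would have to hold in $P(C_*,\Omega)$ for some $i$; applying $\varphi$ together with the fact that matched fibers of $\varphi$ have cardinality two then forces $g_i(a,b)=(a',b')$, contradicting $(a',b')\notin G(a,b)$. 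Hence the weights of $M'$ agree with those of $M$ and remain invertible. For acyclicity, each ``new'' cover $a'\prec_{C'}b'$ produced by a nonzero correction corresponds to a three-step zig-zag $b'\succ g_ia$, $(g_ia,g_ib)\in M$, $g_ib\succ a'$ in $P(C_*,\Omega)$, so any directed cycle in the $M'$-modified graph on $P(C'_*,\Omega')$ would expand, by replacing each new edge with its zig-zag, to a directed cycle in the $M$-modified graph on $P(C_*,\Omega)$, contradicting acyclicity of $M$.

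Granted this verification, the induction hypothesis applied to $(C'_*,M')$ gives $C'_*\cong_G(C'_*)^{M'}\oplus T'_*$ with $T'_*=\bigoplus_{G(a',b')\in M'/G}{\cal A}(Gb')$. Setting $C_*^M:=(C'_*)^{M'}$ and $T_*:={\cal A}(Gb)\oplus T'_*$, concatenation of the two isomorphisms produces $C_*\cong_G C_*^M\oplus T_*$, completing the induction.
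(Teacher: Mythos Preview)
Your argument is correct and follows the same inductive skeleton as the paper: split off one matched $G$-orbit, check that the residual matching is still $G$-equivariant acyclic with invertible weights, and recurse. The splitting via a quotient and an explicit $G$-section is just a repackaging of the paper's basis changes $f_{n-1},f_n$.

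The one substantive difference is the choice of orbit. The paper does \emph{not} remove an arbitrary matched orbit: it passes to the poset $Q$ of Proposition~\ref{smallfibers} and picks $q=\varphi(a)=\varphi(b)$ \emph{minimal} among the two-element fibers. Minimality forces $gb\not\succ y$ for every $y$ lying in a surviving matched pair, so after the basis change the weight of \emph{every} cover among such elements is literally unchanged, and acyclicity of $M^{Gb}$ is immediate by transporting along the bijection $f$. You instead allow an arbitrary orbit and pay for it with two extra checks: the $\varphi$-sandwich $g_iq\le\varphi(b')=\varphi(a')\le g_iq$ showing matched weights are preserved, and the zig-zag expansion turning a hypothetical $M'$-cycle into an $M$-cycle. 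Both routes work; the paper's buys a one-line acyclicity argument, yours shows the orbits may be peeled off in any order.

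One small point you skipped: the summands ${\cal A}(Gb')$ produced by the induction hypothesis are subcomplexes of $C'_*$, not of $C_*$, so before assembling $T_*$ you need the $G$-chain isomorphism ${\cal A}^{C'_*}(Gb')\cong_G{\cal A}^{C_*}(Gb')$. This is immediate from Lemma~\ref{iso} (both are two-term complexes with top term $\span(Gb')$ and invertible differential), and the paper writes it out explicitly.
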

This in an $G$-equivariant version of Theorem 11.24 in \cite[Chapter 11.3]{buch}. Notice that $C_*^M$ is unique up to $G$-isomorphism.
\begin{proof}
By induction on the number $m=|M/G|$ of orbits in $M$. For $m=0$, we set $C_*^M:=C_*$. Now assume $m>0$. By Proposition \ref{smallfibers}, there exists a finite poset $Q$ and an order-preserving $G$-map with small fibers $\varphi:P(C_*,\Omega)\longrightarrow Q$ such that $M(\varphi)=M$. We consider the following subposet of $Q$:
\[
Q':=\{q\in Q\mid\varphi^{-1}(q)=\{a,b\}\text{ with }a\not=b\}
\]
Let $q\in Q'$ be a minimal element. Assume $a\in\Omega_{n-1}$, $b\in\Omega_n$ and $\varphi^{-1}(q)=\{a,b\}$. We construct a new basis $\widetilde\Omega$ by replacing $\Omega_{n-1}$ and $\Omega_n$ as follows. We construct two $G$-automorphisms $f_{n-1}:C_{n-1}\longrightarrow C_{n-1}$ and $f_n:C_n\longrightarrow C_n$.
\begin{eqnarray*}
f_{n-1}:C_{n-1}\supset\Omega_{n-1}&\longrightarrow&C_{n-1}\\
ga&\longmapsto&g\partial_n b\text{ for all $g\in G$}\\
x&\longmapsto&x\text{ for $x\not\in Ga$}
\end{eqnarray*}
We set $\Omega^{Gb}_{n-1}:=\Omega_{n-1}\setminus Ga$, then $\widetilde\Omega_{n-1}:=f_{n-1}(\Omega_{n-1})=\Omega^{Gb}_{n-1}\cup Gf_{n-1}(a)$ generates $C_{n-1}$ since the linear representation of $f_{n-1}(a)$ is
\[
f_{n-1}(a)=\partial_n b=w(b\succ a)\cdot a+\sum_{x\in\Omega^{Gb}_{n-1}}w(b\succ x)\cdot x
\]
which implies
\[
a=\frac 1{w(b\succ a)}\cdot f_{n-1}(a)-\sum_{x\in\Omega^{Gb}_{n-1}}\frac{w(b\succ x)}{w(b\succ a)}\cdot x
\]
hence $a\in\span(f_{n-1}(\Omega_{n-1}))$. This implies $ga\in\span(f_{n-1}(\Omega_{n-1}))$ for all $g\in G$ since $f_{n-1}$ is a $G$-map by construction.

Assume
\[
\sum_{x\in\Omega^{Gb}_{n-1}}\lambda_xx+\sum_{a'\in Ga}\lambda_{a'}f_{n-1}(a')=0
\]
Then in particular
\begin{eqnarray*}
&k_a\left(\sum_{x\in\Omega^{Gb}_{n-1}}\lambda_xx+\sum_{a'\in Ga}\lambda_{a'}f_{n-1}(a')\right)&=0\\
\Longrightarrow&\lambda_a\cdot k_a(\partial_n b)&=0\\
\Longrightarrow&\lambda_a\cdot w(b\succ a)&=0\\
\Longrightarrow&\lambda_a&=0
\end{eqnarray*}
For any $g\in G$ this implies $\lambda_{ga}=0$, because $f_{n-1}$ is a $G$-map. Since $\Omega^{Gb}_{n-1}$ is linear independent, we have $\lambda_x=0$ for all $x\in\Omega^{Gb}_{n-1}$.
\begin{eqnarray*}
f_n:C_n\supset\Omega_n&\longrightarrow&C_n\\
gb&\longmapsto&gb\text{ for all $g\in G$}\\
x&\longmapsto&x-\sum_{b'\in Gb}w(x\succ\partial_n b')\cdot b'\text{ for $x\not\in Gb$}
\end{eqnarray*}
We set $\Omega_n^{Gb}:=f_n(\Omega_n\setminus Gb)$, then $\widetilde\Omega_n:=f_n(\Omega_n)=\Omega_n^{Gb}\cup Gb$ generates $C_n$, since for all $x\in\Omega_n\setminus Gb$ we have
\[
x=f_n(x)+\sum_{b'\in Gb}w(x\succ\partial b')\cdot b'\in\span(\widetilde\Omega_n)
\]
where $f_n(x)\in\span(\Omega_n^{Gb})$ and $\sum_{b'\in Gb}w(x\succ\partial b')\cdot b'\in\span(Gb)$. Clearly we have $k_x(f_n(x))=1$ and $k_x(f_n(x'))=0$ for all $x'\in\Omega_n\setminus\{x\}$. Assume
\[
\sum_{x'\in\Omega_n\setminus Gb}\lambda_{x'} f_n(x')+\sum_{b'\in Gb}\lambda_{b'}b'=0
\]
Then for all $x\in\Omega_n\setminus Gb$:
\begin{eqnarray*}
&k_x\left(\sum_{x'\in\Omega_n\setminus Gb}\lambda_{x'} f_n(x')+\sum_{b'\in Gb}\lambda_{b'}b'\right)&=0\\
\Longrightarrow&\lambda_xk_x(f_n(x))&=0\\
\Longrightarrow&\lambda_x&=0
\end{eqnarray*}
Since $Gb$ is linear independent, this also implies $\lambda_{gb}=0$ for all $g\in G$.

Hence $\widetilde\Omega_{n-1}$ and $\widetilde\Omega_n$ are bases of $C_{n-1}$ and $C_n$ respectively. Let $\widetilde\Omega$ denote the basis of $C_*$ which is obtained from $\Omega$ by replacing $\Omega_{n-1}$ and $\Omega_n$ with these newly constructed bases. Then $\widetilde\Omega$ has the following properties:
\begin{itemize}
 \item[(1)] $\partial_n(\span(\Omega_n^{Gb}))\subset\span(\Omega_{n-1}^{Gb})$
 \item[(2)] $\im\partial_{n+1}\subset\span(\Omega_n^{Gb})$
\end{itemize}
For $b\in\widetilde\Omega$ let $\widetilde k_b$ denote the linear function which maps an element to the coefficient of $b$ inside its linear representation.

For all $x\in\Omega_n\setminus Gb$ we have
\begin{eqnarray*}
\widetilde k_{f_{n-1}(a)}(\partial_n f_n(x))&=&\widetilde k_{f_{n-1}(a)}(\partial_n x)-\sum_{b'\in Gb}w(x\succ\partial_n b')\widetilde k_{f_{n-1}(a)}(\partial_n b')\\
&=&\widetilde k_{f_{n-1}(a)}(\partial_n x)-w(x\succ\partial_n b)\\
&=&\widetilde k_{f_{n-1}(a)}(\partial_n x)-w(x\succ f_{n-1}(a))\\
&=&0
\end{eqnarray*}
Since $f_n$ is a $G$-map, we also have $\widetilde k_{f_{n-1}(ga)}(\partial_n f_n(x))=0$ for all $g\in G$ and all $x\in\Omega_n\setminus Gb$. Hence (1) is proven.

We have $\widetilde k_{f_{n-1}(a)}(\partial_n f_n(b'))=\widetilde k_{f_n(b)}(f_n(b'))$ for all $b'\in Gb$ by construction of $f_{n-1}$, on the other hand we have $0=\widetilde k_{f_{n-1}(a)}(\partial_n x)=\widetilde k_{f_n(b)}(x)$ for all $x\in\Omega_n^{Gb}$ as shown before. Hence $\widetilde k_{f_{n-1}(a)}(\partial_n z)=\widetilde k_{f_n(b)}(z)$ for all $z\in C_n$. In particular $0=\widetilde k_{f_{n-1}(a)}(\partial_n\partial_{n+1}(y))=\widetilde k_{f_n(b)}(\partial_{n+1}(y))$ for all $y\in C_{n+1}$. Since $C_*$ is a $G$-complex, we also have $\widetilde k_{f_n(gb)}(\partial_{n+1}(y))=0$ for all $y\in C_{n+1}$ and all $g\in G$. Hence (2) is proven.

We obtain the following two $G$-subcomplexes ${\cal A}(Gb)$ and $C_*^{Gb}$:
\[\dots\longrightarrow 0\longrightarrow\span(Gb)\overset{\partial_n^{Gb}}\longrightarrow\span(Gf_{n-1}(a))\longrightarrow 0\longrightarrow\dots\]
\[\dots\longrightarrow C_{n+1}\overset{\partial_{n+1}}\longrightarrow\span(\Omega_n^{Gb})\overset{\partial_n}\longrightarrow\span(\Omega_{n-1}^{Gb})\overset{\partial_{n-1}}\longrightarrow C_{n-2}\longrightarrow\dots\]

Clearly we have $C_*\cong_G C_*^{Gb}\oplus{\cal A}(Gb)$. Furthermore $C_*^{Gb}$ is a free $G$-chain complex with $G$-basis $\Omega^{Gb}$, which is obtained from $\Omega$ by replacing $\Omega_n$ and $\Omega_{n-1}$ by $\Omega_n^{Gb}$ and $\Omega_{n-1}^{Gb}$ respectively.

Now we construct an $G$-equivariant acyclic matching $M^{Gb}$ on $P(C_*^{Gb},\Omega^{Gb})$ such that $w(b\succ a)$ is invertible for any $(a,b)\in M^{Gb}$. Let $R\subset P(C_*,\Omega)$ be the set of all elements appearing in any matching pair in $M\setminus G(a,b)$. We define the map
\begin{eqnarray*}
f:R&\longrightarrow&P(C_*^{Gb},\Omega^{Gb})\\
x&\longmapsto&f_n(x)\text{ for $x\in\Omega_n\setminus Gb$}\\
x&\longmapsto&x\text{ else}
\end{eqnarray*}
which has the following property: Let $x,y\in R$ such that $x\succ y$, then we have $w(f(x)\succ f(y))=w(x\succ y)$. This is clear for $x,y\not\in C_n$. It remains to prove the two other cases:

Case $x\in\Omega_n$: $\varphi(b)=q$ is minimal in $Q'$ by choice. We have $b\not\succ y$, since $b>y$ implies $\varphi(b)>\varphi(y)\in Q'$ which contradicts the minimality of $q$. Because $C_*^{Gb}$ is a $G$-complex, this implies $w(gb\succ y)=0$ for all $g\in G$. Hence $w(f(x)\succ f(y))=w(f(x)\succ y)=w(x\succ y)-\sum_{b'\in Gb} w(x\succ\partial_n b')\cdot w(b'\succ y)=w(x\succ y)$.

Case $y\in\Omega_n$: As mentioned (how $k_x$ behaves on $\widetilde\Omega$) in the proof of the injectivity of $f_n$, we have $\widetilde k_{f(y)}(\partial_{n+1} x)=k_y(\partial_{n+1} x)$. Hence $w(f(x)\succ f(y))=w(x\succ f(y))=w(x\succ y)$.

We set
\[
M^{Gb}:=\{(f(a'),f(b')\mid(a',b')\in M\setminus G(a,b)\}
\]
By the property of $f$, any cycle in $M^{Gb}$ would map to a cycle in $M$ under $f^{-1}$. It is easy to see that $M^{Gb}$ has all desired properties. By induction hypothesis there exists a $G$-chain complex $X_*$ such that
\begin{eqnarray*}
C_*^{Gb}&\cong_G&X_*\oplus\bigoplus_{G(a',b')\in M^{Gb}/G}{\cal A}(Gb')\\
&=&X_*\oplus\bigoplus_{G(a',b')\in M\setminus G(a,b)/G}{\cal A}(Gf_n(b'))
\end{eqnarray*}
$f_n$ induces a $G$-chain isomorphism $(\varphi_n)_n:{\cal A}(Gb')\longrightarrow{\cal A}(Gf_n(b'))$ for any $b'\in R$, where
\begin{eqnarray*}
\varphi_n:\span(Gb')&\longrightarrow&\span(Gf_n(b'))\\
x&\longmapsto&f_n(x)
\end{eqnarray*}
\begin{eqnarray*}
\varphi_{n-1}:\span(G\partial_nb')&\longrightarrow&\span(G\partial_nf_n(b'))\\
x&\longmapsto&(\partial_n\circ f_n)((\partial^{Gb'}_n)^{-1}(x))
\end{eqnarray*}
By Lemma \ref{iso}, the restrictions of $\partial_n$ used in the definition of $\varphi_{n-1}$ are isomorphisms.

We set $C_*^M:=X_*$.
\begin{eqnarray*}
C_*\cong_G C_*^{Gb}\oplus{\cal A}(Gb)&\cong_G&\left(C_*^M\oplus\bigoplus_{G(a',b')\in M\setminus G(a,b)/G}{\cal A}(Gb')\right)\oplus{\cal A}(Gb)\\
&=&C_*^M\oplus\bigoplus_{G(a',b')\in M/G}{\cal A}(Gb')
\end{eqnarray*}
\end{proof}
In the case $G$ is the trivial group, our result is the same as Theorem 11.24 in \cite[Chapter 11.3]{buch}, where $C_*^M$ is the Morse complex defined in \cite[Chapter 11.3]{buch}. By forgetting the action of $G$ we obtain
\[
\bigoplus_{G(a,b)\in M/G}{\cal A}(Gb)\cong\bigoplus_{(a,b)\in M}{\cal A}(b)
\]
Since the Morse complex for the matching $M$ is also a $G$-complex, it is usable in the $G$-equivariant case too.
\begin{cor}
$C_*$ is $G$-homotopy equivalent to $C_*^M$.
\end{cor}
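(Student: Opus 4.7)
The plan is to chain together the two main results already established: Proposition \ref{eqthm} gives a $G$-isomorphism $C_*\cong_G C_*^M\oplus T_*$ with $T_*=\bigoplus_{G(a,b)\in M/G}{\cal A}(Gb)$, while Lemma \ref{iso} says that each summand ${\cal A}(Gb)$ appearing in $T_*$ is $G$-homotopy equivalent to the zero complex. So the strategy is to upgrade the pointwise vanishing of the ${\cal A}(Gb)$'s into a vanishing of the whole $T_*$, and then cancel it off.

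First I would observe that $G$-homotopy equivalence is preserved under direct sums: given $G$-chain maps $\varphi_i:A_*^{(i)}\to B_*^{(i)}$, $\psi_i:B_*^{(i)}\to A_*^{(i)}$ with $G$-chain homotopies $\psi_i\circ\varphi_i\simeq_G\id$ and $\varphi_i\circ\psi_i\simeq_G\id$, the obvious direct-sum maps $\bigoplus\varphi_i$ and $\bigoplus\psi_i$ together with the direct-sum homotopies witness a $G$-homotopy equivalence $\bigoplus A_*^{(i)}\simeq_G\bigoplus B_*^{(i)}$. Applying this to the family of equivalences ${\cal A}(Gb)\simeq_G 0$ supplied by Lemma \ref{iso} yields $T_*\simeq_G 0$.

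Next I would use that adding a fixed complex preserves $G$-homotopy equivalence: from $T_*\simeq_G 0$ we get $C_*^M\oplus T_*\simeq_G C_*^M\oplus 0=C_*^M$, again by taking the direct sum with $\id_{C_*^M}$ on one side and the trivial homotopy on that factor. Combining with the $G$-isomorphism of Proposition \ref{eqthm}, which is in particular a $G$-homotopy equivalence, yields
\[
C_*\cong_G C_*^M\oplus T_*\simeq_G C_*^M,
\]
and transitivity of $G$-homotopy equivalence finishes the proof.

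There is not really a main obstacle; the only thing to be careful about is that everything assembled from Lemma \ref{iso} has to be genuinely $G$-equivariant. Concretely, the chain homotopy constructed in that lemma consists of the inverse $(\partial_n^{Gb})^{-1}$ in degree $n-1$ and zero elsewhere, and this inverse is $G$-equivariant because $\partial_n^{Gb}$ is a $G$-isomorphism; taking direct sums indexed by orbits $G(a,b)\in M/G$ preserves $G$-equivariance, so no verification beyond the ones already carried out in the preceding results is required.
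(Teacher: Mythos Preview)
Your proof is correct and follows essentially the same route as the paper: use Lemma~\ref{iso} to conclude that $T_*$ is $G$-homotopy equivalent to the zero complex, then add $C_*^M$ on both sides and combine with the $G$-isomorphism of Proposition~\ref{eqthm}. The only difference is that you spell out the routine facts (direct sums and adding a fixed summand preserve $G$-homotopy equivalence) that the paper leaves implicit.
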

\begin{proof}
$T_*$ is $G$-homotopy equivalent to the zero complex by Lemma \ref{iso}. Hence $C_*^M\oplus 0$ is $G$-homotopy equivalent to $C_*^M\oplus T_*$ which is $G$-isomorphic to $C_*$ by Proposition \ref{eqthm}, in particular $G$-homotopy equivalent.
\end{proof}
\bibliographystyle{model1-num-names}

\end{document}